\newtheorem{theorem}{Theorem}[section]
\newtheorem{lemma}[theorem]{Lemma}
\newtheorem{corollary}[theorem]{Corollary}
\newtheorem{proposition}[theorem]{Proposition}
\newtheorem{definition}[theorem]{Definition}
\newtheorem{example}[theorem]{Example}
\newenvironment{proof}{\noindent\textsc{Proof: }}
{\hspace{\stretch{1}}$\Box$\medskip}
\begin{document}

\title{Cut ideals of $K_4$-minor free graphs\\  are generated by quadrics.}

\author{
  Alexander Engstr\"om\footnote{The author is Miller Research Fellow 2009-2012 at UC Berkeley, and gratefully acknowledges support from the Adolph C. and Mary Sprague Miller Institute for Basic Research in Science.  \newline 
 The results of this paper were proved in the spring of 2008 when the author visited TU Berlin, and he would like to thank G\"unter Ziegler for his hospitality, and Bernd Sturmfels for proposing the problem.
  }
  \\
Department of Mathematics \\
UC Berkeley \\ 
\texttt{alex@math.berkeley.edu}
}

\date\today

\maketitle

\begin{abstract}
Cut ideals are used in algebraic statistics to study statistical models defined by graphs. Intuitively, topological restrictions on the graphs should imply structural statements about the corresponding cut ideals. Several theorems and many computer calculations support that.

Sturmfels and Sullivant conjectured that 
the cut ideal is generated by quadrics if and only if the graph
is free of $K_4$-minors. Parts of the conjecture has been resolved by Brennan and Chen, and later by Nagel and Petrovi\'c.

We prove the full conjecture by introducing a new type of toric fiber product theorem.
\end{abstract}

\section{Introduction}

In this paper we prove a conjecture by Sturmfels and Sullivant \cite{SS} about toric ideals used in algebraic statistics.
A new connection between commutative algebra and statistics was done by Diaconis and Sturmfels \cite{DiS} when they
introduced the fundamental notion of Markov basis. To explain the connection, we use the first example from the Oberwolfach lectures on algebraic statistics by Drton, Strumfels, and Sullivant \cite{DSS}.

\begin{example}\label{example:introduction}
\emph{In a contingency table, not only data is tabulated but also some marginals. In Table \ref{table:death} about death penalty verdicts the marginals are the row and column sums. To statistically test the hypothesis that the verdicts are from a distribution independent of race, one needs to sample from the set of tables with the same marginals as Table \ref{table:death}.}
\begin{table}
\begin{center}
\begin{tabular}{l|cc|c}
Defendant's race & Yes & No & Total \\
\hline
White & 19 & 141 & 160 \\
Black & 17 & 149 & 166 \\
\hline
Total & 36 & 290 & 326 \\
\end{tabular}
\end{center}
 \caption{Data on death penalty verdicts from \cite{A}, 5.2.2.}\label{table:death}
\end{table}
\emph{The usual way to sample is by a random walk on the set of tables with prescribed marginals, and stop when some test tells you that enough information is collected. The non-trivial task is to find good steps (\emph{Markov moves}) for the random walk, and here commutative algebra enters the picture.}

\emph{Encode the numbers in Table  \ref{table:death} with monomials as in Table \ref{table:monomials}. The data entries in Table \ref{table:monomials}
are collected in the monomial $q_{11}^{19}  q_{12}^{141}q_{21}^{17} q_{22}^{149} \in \mathbb{K}[q_{11},q_{12},q_{21},q_{22}]$
and the marginal entries in the monomial $r_{1\ast}^{160}  r_{2\ast}^{166} r_{\ast 1}^{36}  r_{\ast 2}^{290} \in \mathbb{K}[r_{1 \ast },r_{2 \ast },r_{\ast 1},r_{\ast 2}].$ The translation of calculating row and column sums into the algebraic setting is provided by the ring homomorphism
\[ \phi : \mathbb{K}[q_{11},q_{12},q_{21},q_{22}]  \rightarrow \mathbb{K}[r_{1 \ast },r_{2 \ast },r_{\ast 1},r_{\ast 2}]
\quad \textrm{ defined by } \quad \phi(q_{ab})=r_{a \ast }r_{\ast b}.  \]
The fiber of $r_{1\ast}^{160}  r_{2\ast}^{166} r_{\ast 1}^{36}  r_{\ast 2}^{290}$ are all monomials corresponding to tables with the same
marginals as in Table  \ref{table:death}, and that is the set of tables to find steps for. The kernel of the map $\phi$ is
a toric ideal, and a generating set of that ideal provides us with steps between the monomials in the fiber. In this easy example, the
kernel is generated by $q_{11}q_{22}-q_{12}q_{21}$, and for example provide a Markov move from 
$q_{11}^{19}  q_{12}^{141}q_{21}^{17} q_{22}^{149}$
to
$q_{11}^{20}  q_{12}^{140}q_{21}^{16} q_{22}^{150}$
since their difference is a monomial times $q_{11}q_{22}-q_{12}q_{21}$. 
All monomials in the fiber can be reached by Markov moves using $q_{11}q_{22}-q_{12}q_{21}$, and the statisticians are able to sample
from the set of tables with the same marginals as Table  \ref{table:death}.
   }
\begin{table}
\begin{center}
\[ \begin{array}{cc|c}
q_{11}^{19} & q_{12}^{141} & r_{1\ast}^{160} \\
q_{21}^{17} & q_{22}^{149} & r_{2\ast}^{166} \\
\hline
r_{\ast 1}^{36} & r_{\ast 2}^{290} & \\
\end{array}
 \]
\end{center}
 \caption{The commutative algebra version of Table \ref{table:death}.}\label{table:monomials}
\end{table}
\end{example}

The benefit of translating problems from statistics to commutative algebra as in Example~\ref{example:introduction} is the well developed tool-box for finding generators of ideals, most prominently using Gr\"obner basis.

Many statistical models are described by graphs, with a random variable for every vertex, and marginals described by edges. If we would flip a coin for every vertex in a graph, then the vertex set gets partitioned into two parts: heads and tails. A partition of a graph into two parts is called a \emph{cut} and many questions in statistics, computer science, and optimization theory are naturally formulated, or easily transformed into, questions about cuts. There is also a rich geometric theory associated to cuts, as surveyed by Deza and Laurent \cite{DL}.

\begin{definition}
For a graph $G$, the partition of $V(G)$ into $A$ and $B$ is the \emph{cut} $A\mid B = B \mid A$.
The edge set $ \{ ab \in E(G) \mid a \in A,\,  b\in B\}$ induced by the cut  $A \mid B$ is also denoted $A \mid B$ when no confusion arises. 
 \end{definition}

\begin{example}\label{ex:cut}
\emph{We toss four coins 76 times and get the statistic on eight different cuts in Table~\ref{table:numberOfCuts}. The marginals are encoded with the path graph $1-2-3-4$, and
in Table~\ref{table:cuts} are the cuts tabulated together with how they cut the edges. In Table~\ref{table:numberOfEdgeCuts} are the marginals of Table~\ref{table:numberOfCuts}
calculated, that is, how many times the different edges are cut. In algebraic statistic the corresponding setup is two commutative rings}
\[ \mathbb{K}\left[
\begin{array}{l}
q_{\{1,2,3,4\} \mid \emptyset},\,\,
q_{\{1,2,3\} \mid \{4\}},\,\,
q_{\{1,2,4\} \mid \{3\}},\,\,
q_{\{1,2\} \mid \{3,4\}},\\
q_{\{1,3,4\} \mid \{2\}},\,\,
q_{\{1,3\} \mid \{2,4\}},\,\,
q_{\{1,4\} \mid \{2,3\}},\,\,
q_{\{1\} \mid \{2,3,4\}} 
\end{array}
\right],
\]
\emph{and}
\[ \mathbb{K}[s_{12},s_{23},s_{34},t_{12},t_{23},t_{34}]; \]
\emph{and a ring homomorphism $\phi : \mathbb{K}[q] \rightarrow \mathbb{K}[s,t]$ defined by}
\[ 
\begin{array}{ll}
\phi(q_{\{1,2,3,4\} \mid \emptyset})=t_{12}t_{23}t_{34}, &
\phi(q_{\{1,2,3\} \mid \{4\}})=t_{12}t_{23}s_{34},\\
\phi(q_{\{1,2,4\} \mid \{3\}})=t_{12}s_{23}s_{34}, &
\phi(q_{\{1,2\} \mid \{3,4\}})=t_{12}s_{23}t_{34},\\
\phi(q_{\{1,3,4\} \mid \{2\}})=s_{12}s_{23}t_{34}, &
\phi(q_{\{1,3\} \mid \{2,4\}})=s_{12}s_{23}s_{34},\\
\phi(q_{\{1,4\} \mid \{2,3\}})=s_{12}t_{23}s_{34}, &
\phi(q_{\{1\} \mid \{2,3,4\}})=s_{12}t_{23}t_{34}, 
\end{array}
\]
\emph{in accordance with Table~\ref{table:cuts}, where $s_{ij}$ denotes that the edge $ij$ is separated and  $t_{ij}$ that it is kept together by the cut. The kernel of $\phi$ is a toric ideal generated
by the binomials}
\[
\begin{array}{c}
q_{\{1,3,4\} \mid \{2\}}q_{\{1,2,3\} \mid \{4\}} - q_{\{1,4\} \mid \{2,3\}}q_{\{1,2\} \mid \{3,4\}} \\
q_{\{1,3\} \mid \{2,4\}}q_{\{1,2,3,4\} \mid \emptyset} - q_{\{1\} \mid \{2,3,4\}}q_{\{1,2,4\} \mid \{3\}}, \\
q_{\{1,2,4\} \mid \{3\}}q_{\{1\} \mid \{2,3,4\}} - q_{\{1,4\} \mid \{2,3\}}q_{\{1,2\} \mid \{3,4\}}, \\
q_{\{1,3\} \mid \{2,4\}}q_{\{1,2,3,4\} \mid \emptyset} - q_{\{1,2,3\} \mid \{4\}}q_{\{1,3,4\} \mid \{1\}}.
\end{array}
\]
\begin{table}
\begin{center}
\[ \begin{array}{c|c}
\textrm{Cut} & \textrm{Number of occurrences}   \\
\hline
\{1,2,3,4\} \mid \emptyset & 8  \\
\{1,2,3\} \mid \{4\}  & 13  \\
\{1,2,4\} \mid \{3\} & 12  \\
\{1,2\} \mid \{3,4\}  & 6  \\
\{1,3,4\} \mid \{2\} & 9  \\
\{1,3\} \mid \{2,4\}  & 8  \\
\{1,4\} \mid \{2,3\} & 11  \\
\{1\} \mid \{2,3,4\}  & 9  \\
\end{array}
 \]
\end{center}
 \caption{The number of cuts of different types from tossing four coins 76 times.}\label{table:numberOfCuts}
\end{table}
\begin{table}
\begin{center}
\[ \begin{array}{c|ccc}
\textrm{Cut} & \textrm{Edge 12} & \textrm{Edge 23}  & \textrm{Edge 34}   \\
\hline
\{1,2,3,4\} \mid \emptyset & 0 & 0 & 0 \\
\{1,2,3\} \mid \{4\}  & 0 & 0 & 1 \\
\{1,2,4\} \mid \{3\} & 0 & 1 & 1 \\
\{1,2\} \mid \{3,4\}  & 0 &  1 & 0 \\
\{1,3,4\} \mid \{2\} & 1 & 1 & 0 \\
\{1,3\} \mid \{2,4\}  & 1 & 1 & 1 \\
\{1,4\} \mid \{2,3\} & 1 & 0 & 1 \\
\{1\} \mid \{2,3,4\}  & 1 & 0 & 0 \\
\end{array}
 \]
\end{center}
 \caption{The cuts of the path graph $1-2-3-4$. Edges with vertices in different parts are tabulated with 1 and those in the same parts with 0.}\label{table:cuts}
\end{table}
\begin{table}
\begin{center}
\[ \begin{array}{c|ccc}
  & \textrm{Edge 12} & \textrm{Edge 23}  & \textrm{Edge 34}   \\
\hline
\# \textrm{ Cuts} & 37 & 35 & 44 \\
\end{array}
 \]
\end{center}
 \caption{The edge cuts of the path graph $1-2-3-4$ given
 the cut statistic in Table~\ref{table:numberOfCuts}.}\label{table:numberOfEdgeCuts}
\end{table}
\end{example}

The toric ideal in Example~\ref{ex:cut} is the cut ideal of a path on four vertices.
The theory of cut ideals was initiated by Sturmfels and Sullivant \cite{SS}.
\begin{definition}
The \emph{cut ideal of the graph $G$}, $I_G$, is the kernel of the ring homomorphism
$\phi_G : \mathbb{K}[q]\rightarrow \mathbb{K}[s,t]$ defined by
\[  q_{A\mid B}\mapsto \prod_{ij {\tiny \textrm{ is in } A\mid B}} s_{ij} \prod_{ij {\tiny \textrm{ is not in } A\mid B}} t_{ij}, \]
where
\[ \mathbb{K}[q]  =  \mathbb{K}[q_{A \mid B}\, \mid \, \textrm{there is a cut $A | B$ of $G$}], \]
\[ \mathbb{K}[s,t] =  \mathbb{K}[s_{ij},t_{ij} \, \mid \, \textrm{$ij$ is an edge of $G$}]. \]
\end{definition}
The applications of cut ideals in statistics and in the applied sciences are not apparent from Example~\ref{ex:cut}, since it's too small. As described in \cite{SS} there are applications in biology \cite{PS}, for example by the Jukes-Cantor model.

From theorems about similar constructions, and computer calculations, it is reasonable to believe that topological properties of $G$ should be reflected in algebraic properties of $I_G$.\newline
\newline
{\bf Theorem (Conjectured by Sturmfels and Sullivant \cite{SS})} \emph{  The cut ideal is generated by quadrics if and only if $G$ is free of  $K_4$ minors.}\newline

Several partial results have been proved: Brennan and Chen \cite{BC} showed it for subdivisions of books and outerplanar graphs. A ring graph is, more or less, a bunch of disjoint
cycles that are connected by a tree that touch any cycle in at most one vertex. For ring graphs the conjecture was proved by Nagel and Petrovi\'c \cite{NP}.

The conjecture follows as a corollary of Theorem~\ref{theo:mulimit}, which
is a fiber product type theorem. In the same way as the fiber
product theorems in \cite{DS} and \cite{SS} could be generalized
in \cite{S}, we will present a more general form of 
Theorem~\ref{theo:mulimit} in \cite{E}. 
Methods from this paper were used on ideals of graph homomorphisms in Engstr\"om and Nor\'en's paper \cite{EN}.

\subsection{Basic notions of cut ideals}

The largest degree of a minimal generator of $I_G$ is $\mu(G)$.
By Corollary 3.3 of \cite{SS} the contraction of an edge or deletion of a vertex cannot increase $\mu$.
In Theorem 2.1 of \cite{SS} it is proved that if $G$ is glued together from two graphs $G_1$ and $G_2$ over a
complete graph with zero, one, or two vertices, then the cut ideal $I_G$ is generated by lifts
of generators of $I_{G_1}$ and $I_{G_2}$; and quadratic binomials for sorting cuts.
The main theorem of this paper is a variation on Theorem 2.1 of \cite{SS} when gluing over an edge.  

\section{Decompositions of graphs and ideals}

The induced subgraph of $G$ on $S$ is denoted $G[S]$. 

\begin{definition} Let $u,v$ be two vertices of $G$ and
   $A_1 \mid B_1, A_2\mid B_2,\cdots,A_n\mid B_n$ a list of
   cuts. The \emph{height}, $h_{u,v}(q)$, of
   \[q=q_{A_1\mid B_1}q_{A_2\mid B_2}\cdots q_{A_n\mid B_n}\]
   with respect to $u$ and $v$ is the number of cuts in
   the list with $u$ and $v$ in different parts.
\end{definition}

If there is an edge between $u$ and $v$ in $G$ then $h_{u,v}(q)$ is the
degree of $s_{uv}$ in $\phi_G(q)$. Another way to define the
height of $q$ with respect to $u$ and $v$ is as the degree of
$s_{uv}$ in $\phi_{G+uv}(q)$, and that is a good way to think of it.

\begin{definition}
   A set of generators
   \[ q_{A_{i,1}\mid B_{i,1}}q_{A_{i,2}\mid B_{i,2}}\cdots q_{A_{i,n_i}\mid B_{i,n_i}} 
      -  q_{A_{i,1}'\mid B_{i,1}'}q_{A_{i,2}'\mid B_{i,2}'}\cdots q_{A_{i,n_i}'\mid B_{i,n_i}'}  \]
   of $I_G$ is \emph{slow-varying} with respect to the vertices $u$ and $v$ of $G$ if
   \[  \left| h_{u,v}(q_{A_{i,1}\mid B_{i,1}}\cdots q_{A_{i,n_i}\mid B_{i,n_i}})
     -h_{u,v}(q_{A_{i,1}'\mid B_{i,1}'}\cdots q_{A_{i,n_i}'\mid B_{i,n_i}'}) \right| \leq 2   \]
   for all $i$.
\end{definition}

\begin{lemma}\label{lemma:walkmod}
   If $w_1-w_2-\cdots-w_k$ is a path in $G$ then
  \[h_{w_1,w_k}(q_{A\mid B}) \equiv \sum_{i=1}^{k-1} ( s_{w_iw_{i+1}}-\textrm{degree of } \phi_G(q_{A\mid B}))\]
modulo 2.
\end{lemma}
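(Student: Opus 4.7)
The plan is to give a direct parity argument via telescoping, with no need for heavy machinery. First I would encode the cut $A\mid B$ as an indicator function $f : V(G)\to\{0,1\}$ with $f(u)=0$ for $u\in A$ and $f(u)=1$ for $u\in B$. Two elementary observations drive the proof: (i) for any two vertices $u,v$ one has $h_{u,v}(q_{A\mid B})\in\{0,1\}$, and this height equals $1$ iff $u$ and $v$ lie on opposite sides of the cut, hence $h_{u,v}(q_{A\mid B})\equiv f(u)+f(v)\pmod{2}$; (ii) for any edge $ij$ of $G$, the $s_{ij}$-degree of $\phi_G(q_{A\mid B})$ is $1$ if $ij$ is cut by $A\mid B$ and $0$ otherwise, which is again $f(i)+f(j)\pmod{2}$ by the same reasoning.

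With these two reformulations, the lemma reduces to a telescoping identity. Indeed,
\[
\sum_{i=1}^{k-1}\bigl(\text{$s_{w_iw_{i+1}}$-degree of }\phi_G(q_{A\mid B})\bigr)
\equiv \sum_{i=1}^{k-1}\bigl(f(w_i)+f(w_{i+1})\bigr)
\equiv f(w_1)+f(w_k) \pmod{2},
\]
because every intermediate vertex $w_i$ with $2\le i\le k-1$ contributes $f(w_i)$ exactly twice to the sum. By observation (i), the right-hand side is congruent to $h_{w_1,w_k}(q_{A\mid B})$ modulo $2$, which is what we wanted.

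There is essentially no obstacle here: the lemma is a formal mod-$2$ identity that only uses the definition of $\phi_G$ and the convention that $s$-variables record separated edges. The only care needed is to remember that the lemma is stated for a single generator $q_{A\mid B}$ rather than a product; both sides extend additively over products since $\phi_G$ is a ring homomorphism and $h_{u,v}$ is defined as a sum over factors, so the single-cut case is already the decisive one. I would present the argument exactly in the two steps above, noting that the path structure is used only to produce the telescoping cancellation of the interior vertices $w_2,\dots,w_{k-1}$.
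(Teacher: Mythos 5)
Your proof is correct and is essentially the paper's argument made explicit: the paper's one-line proof (``a walk on the path from $w_1$ to $w_k$ crosses the cut an odd number of times if and only if $w_1$ and $w_k$ are in different parts'') is exactly the telescoping parity identity you obtain from the indicator function $f$.
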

\begin{proof}
A walk on the path from $w_1$ to $w_k$ crosses the cut an odd number of times if and only if $w_1$ and $w_k$ are in different parts.
\end{proof}

\begin{lemma}\label{lemma:evenpath}
If there is a path in $G$ from $u$ to $v$ and $\phi_G(q)=\phi_G(q')$ then $h_{u,v}(q)\equiv h_{u,v}(q')$ modulo 2.
\end{lemma}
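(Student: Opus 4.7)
The plan is to reduce the statement for arbitrary products $q$ and $q'$ to the single-cut version already established in Lemma~\ref{lemma:walkmod}, using the additivity of both the height function and the $s_{w_iw_{i+1}}$-degree. Fix a path $u=w_1-w_2-\cdots-w_k=v$ in $G$ guaranteed by the hypothesis.

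First I would observe that the height is additive on products: if $q=\prod_j q_{A_j\mid B_j}$, then $h_{u,v}(q)=\sum_j h_{u,v}(q_{A_j\mid B_j})$ directly from the definition (each factor contributes $1$ exactly when $u$ and $v$ lie in different parts of its cut). On the other side, the $s_{w_iw_{i+1}}$-degree is additive under multiplication in $\mathbb{K}[s,t]$, so the $s_{w_iw_{i+1}}$-degree of $\phi_G(q)=\prod_j \phi_G(q_{A_j\mid B_j})$ is the sum of the corresponding degrees of the factors. Summing the congruence of Lemma~\ref{lemma:walkmod} over all factors of $q$ therefore yields
\[ h_{u,v}(q)\equiv \sum_{i=1}^{k-1}\bigl(s_{w_iw_{i+1}}\text{-degree of }\phi_G(q)\bigr)\pmod 2, \]
and the same identity holds with $q$ replaced by $q'$.

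Finally, since $\phi_G(q)=\phi_G(q')$ as elements of $\mathbb{K}[s,t]$, every monomial degree of the two images agrees; in particular the $s_{w_iw_{i+1}}$-degrees match for each $i=1,\dots,k-1$. The right-hand sides of the two congruences are therefore equal, whence $h_{u,v}(q)\equiv h_{u,v}(q')\pmod 2$, as claimed.

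There is no real obstacle here: the entire content lies in Lemma~\ref{lemma:walkmod}, and the present lemma is merely its multiplicative closure combined with the fact that $\phi_G(q)=\phi_G(q')$ determines all monomial degrees. The only point worth checking carefully is that the path used to invoke Lemma~\ref{lemma:walkmod} is the same for every factor, but this is immediate because the path depends only on $G$, $u$, and $v$.
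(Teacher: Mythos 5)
Your proof is correct and follows exactly the route the paper intends: the paper's own proof is the single sentence ``Use Lemma~\ref{lemma:walkmod},'' and your write-up supplies precisely the additivity-of-height and additivity-of-$s$-degree observations needed to make that reduction rigorous. Nothing further to add.
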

\begin{proof}
Use Lemma~\ref{lemma:walkmod}.
\end{proof}

\begin{proposition}\label{prop:2vary}
   Any set of generators of $I_G$ validating $\mu(G)\leq 2$, is
   slow-varying with respect to any vertex pair.
\end{proposition}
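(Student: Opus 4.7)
The plan is to unpack the definition of slow-varying and observe that, under $\mu(G)\leq 2$, the required inequality is forced by degree alone. For any monomial $q = q_{A_1\mid B_1}\cdots q_{A_n\mid B_n}$ in $\mathbb{K}[q]$, the height $h_{u,v}(q)$ simply counts how many of the $n$ factors place $u$ and $v$ on opposite sides of their cut, so one has the trivial sandwich $0\leq h_{u,v}(q)\leq n = \deg q$.

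The map $\phi_G$ is homogeneous of $q$-degree $1$: every variable $q_{A\mid B}$ is sent to a monomial in $(s,t)$ of total degree $|E(G)|$, one factor per edge. Consequently, in any toric binomial $q - q'\in I_G$ one necessarily has $\deg q = \deg q'$. If a set of generators of $I_G$ validates $\mu(G)\leq 2$, then every generator $q-q'$ has common degree $n\leq 2$. Combining this with the preceding paragraph, both $h_{u,v}(q)$ and $h_{u,v}(q')$ lie in $\{0,1,2\}$, and therefore $|h_{u,v}(q)-h_{u,v}(q')|\leq 2$, which is precisely the slow-varying condition for the arbitrary vertex pair $u,v$.

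I do not foresee a real obstacle: this proposition is essentially a definitional observation that repackages the quadratic-generation hypothesis into the numerical form later fiber-product machinery will consume. The only thing worth checking carefully is that $\phi_G$ is indeed $q$-homogeneous of degree one, but that is immediate from the defining formula $q_{A\mid B}\mapsto \prod_{ij\in A\mid B} s_{ij}\prod_{ij\notin A\mid B} t_{ij}$. Note that Lemma~\ref{lemma:evenpath} would additionally pin the difference to an even number when $u,v$ lie in the same component, so in that case the inequality can only be saturated by a jump of exactly $0$ or $2$; however, no such refinement is needed for the statement at hand.
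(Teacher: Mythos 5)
Your proof is correct and is precisely the elementary degree-counting argument the paper signals by writing ``Clear'': since each generator $q-q'$ has degree at most $2$ and $\phi_G$ is graded, both $h_{u,v}(q)$ and $h_{u,v}(q')$ lie in $\{0,1,2\}$, forcing $|h_{u,v}(q)-h_{u,v}(q')|\leq 2$.
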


\begin{proof} Clear. \end{proof}

\begin{theorem}\label{theo:mulimit}
  Let $G$ be a graph with two special non-adjacent vertices $u$ and $v$.
  Assume that $G$ almost can be decomposed into a left and right part:
  There are $L,R\subseteq V(G)$ such that $L\cup R=V(G)$, $L\cap R=\{u,v\}$,
  and $E(G)=E(G[L])\cup E(G[R])$.

  If there is a path from $u$ to $v$ both in $G[L]$ and in $G[R]$, and there
  are slow-varying generators of both $I_{G[L]}$ and $I_{G[R]}$ with respect to $u$ and $v$, then
  \[ \mu(G) \leq \max \{ 2\mu(G[L])-2, 2\mu(G[R])-2, \mu(G[L]+uv), \mu(G[R]+uv) \}. \]
  
  The cut ideal of $G$ is generated by a union of
  \begin{itemize}
     \item[(i)] lifts of generators of $I_{G[L]+uv}$,
     \item[(ii)] lifts of generators of $I_{G[R]+uv}$,
     \item[(iii)] joins of generators $q_{1}-q_{2}$ of $I_{G[L]}$
                  and $q_{3}-q_{4}$ of $I_{G[R]}$ such that
                  $|h_{u,v}(q_1)-h_{u,v}(q_2)|=|h_{u,v}(q_3)-h_{u,v}(q_4)|=2$,
     \item[(iv)] quadratic binomials to reorder with.
  \end{itemize}
\end{theorem}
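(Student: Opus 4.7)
The plan is to prove that $I_G$ is generated by binomials of types~(i)--(iv), and the degree bound on $\mu(G)$ then follows by tracking the degrees that arise in the rewriting. The geometric starting point is that each cut $A\mid B$ of $G$ decomposes uniquely into a pair of cuts $(A\cap L)\mid(B\cap L)$ of $G[L]$ and $(A\cap R)\mid(B\cap R)$ of $G[R]$ that agree on $\{u,v\}$, and conversely every such compatible pair glues back to a cut of $G$. This induces a projection $m\mapsto(m^L,m^R)$ on monomials, and since $E(G)=E(G[L])\cup E(G[R])$ is a disjoint union one obtains the factorization $\phi_G(m)=\phi_{G[L]}(m^L)\cdot\phi_{G[R]}(m^R)$. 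For any $m-m'\in I_G$ this forces $\phi_{G[L]}(m^L)=\phi_{G[L]}((m')^L)$ and $\phi_{G[R]}(m^R)=\phi_{G[R]}((m')^R)$, and Lemma~\ref{lemma:evenpath} applied to the $u,v$-paths in $G[L]$ and $G[R]$ yields $h_{u,v}(m^L)\equiv h_{u,v}((m')^L)\pmod 2$.

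The rewriting then proceeds in three layers. First, two $G$-monomials with equal multisets $m^L$ and $m^R$ but different pairings of individual $L$-cuts with $R$-cuts differ by a sequence of type~(iv) quadratic reorderings, each swapping two pairings whose $L$-parts agree on $\{u,v\}$. Second, in the case $h_{u,v}(m^L)=h_{u,v}((m')^L)$, the degree of $s_{uv}$ in $\phi_{G[L]+uv}(p)$ is exactly $h_{u,v}(p)$, so $m^L-(m')^L\in I_{G[L]+uv}$; we rewrite $m^L$ into $(m')^L$ using generators of $I_{G[L]+uv}$, each of which lifts to a type~(i) binomial in $I_G$ by pairing the unchanged $R$-cuts consistently on both sides. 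The $R$-side is handled symmetrically with type~(ii), and a final pass of type~(iv) reconciles the pairings.

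The main obstacle, and the heart of the argument, is the case $h_{u,v}(m^L)\neq h_{u,v}((m')^L)$, where the difference is an even nonzero integer and must be bridged by type~(iii) joins, each of which shifts $h_{u,v}$ on both sides simultaneously by $\pm 2$. I invoke the slow-varying hypothesis on $I_{G[L]}$ to decompose any passage from $m^L$ to $(m')^L$ in the fiber of $\phi_{G[L]}$ into elementary moves of $|h|$-variation at most~$2$, and the matching hypothesis on $I_{G[R]}$ to supply partner moves. A slow-varying generator $q_1-q_2\in I_{G[L]}$ with $|h_{u,v}(q_1)-h_{u,v}(q_2)|=2$ joined with a slow-varying generator $q_3-q_4\in I_{G[R]}$ with the same property produces the type~(iii) binomial $(q_1\bowtie q_3)-(q_2\bowtie q_4)\in I_G$, where $\bowtie$ pairs $L$-cuts with $u,v$-matching $R$-cuts. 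The delicate point is to organize the construction so that, after padding cuts to reconcile $\deg q_1$ with $\deg q_3$ and to align the $u,v$-profiles of the fillers, the degree of the resulting join is bounded by $\max\{2\mu(G[L])-2,2\mu(G[R])-2\}$; combined with the degrees of the type~(i), (ii), and (iv) binomials, this yields the claimed bound on $\mu(G)$.
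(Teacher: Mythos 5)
Your outline follows the same global strategy as the paper — project a monomial relation $m-m'$ to $L$- and $R$-sides via the bijection between cuts of $G$ and compatible pairs of cuts, handle equal-height pairs with $I_{G[L]+uv}$ and $I_{G[R]+uv}$, use reorderings, and bridge height changes with joins of slow-varying generators. But the heart of the argument is precisely the synchronization you summarize as ``supply partner moves,'' and that step is not automatic. A slow-varying sequence in the fiber of $\phi_{G[L]}$ from $m^L$ to $(m')^L$ will in general have a height profile that rises and falls, and so will an independently chosen sequence on the $R$-side; these profiles need not agree in length, in the set of heights visited, or in the order of visits, so there is no way to pair the moves directly. The paper resolves this with an explicit \emph{normalization}: for each height level $h$ in the descending list from $h_{u,v}(q_L)$ to $h_{u,v}(q'_L)$, one identifies the last monomial in the sequence at height $h$, moves to it entirely by $I_{G[L]+uv}$-generators (which fix the height), and then makes a single slow-varying $I_{G[L]}$-step down to $h-2$; this produces a sequence whose height is monotone non-increasing, and doing the same on the $R$-side yields two sequences with identical height profiles that can be glued at every step. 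That monotonization is what makes the notion of ``partner move'' well-defined, and it is missing from your proposal. Note also that the set of heights visited need not start at $h_{u,v}(q_L)$ in the raw sequence — it can overshoot upward — and the normalization quietly collapses those excursions into the $I_{G[L]+uv}$ segment, a point worth making explicit.

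A second gap is the degree bound for the type~(iii) joins, which you flag as ``the delicate point'' but leave unresolved. When the $L$-side generator has degree $k$ and the $R$-side has degree $l\neq k$, both must be padded with stationary cuts before they can be joined, and the padding must respect the $u,v$-profile so the join is still a relation in $I_G$. The paper handles this by extending the generating sets $\mathbf{F}_L,\mathbf{F}_R$ to $\tilde{\mathbf{F}}_L,\tilde{\mathbf{F}}_R$, which include all products $q_1(q_2-q_3)$ of degree at most $2M-2$ where $M$ is the maximal degree of a slow-varying generator; this is where the $2\mu(G[L])-2$ and $2\mu(G[R])-2$ in the bound come from. Without spelling out this extension, ``tracking the degrees that arise in the rewriting'' does not actually produce the claimed bound. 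So your proposal captures the architecture of the proof but leaves unjustified the two mechanisms — monotone normalization of heights and the padded extension of generating sets — that the argument stands on.
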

  
\begin{proof}
The basic part of this proof, only involving $G[L]$ and $G[R]$, is in the spirit of the proof of Theorem 2.1 in \cite{SS}.

  We will prove the theorem by an explicit construction of generators
  for $I_G$.
  
  Let
  \[ q = \prod_{i=1}^n q_{A_i \mid B_i}
         \quad \quad \textrm{and} \quad \quad
     q' = \prod_{i=1}^n q_{A'_i \mid B'_i}
  \]
  be two elements of $\mathbb{K}[q_{A\mid B} \mid A \sqcup B = V(G) ]$ with
  $\phi_G(q)=\phi_G(q')$. If we for any such $q$ and $q'$ can construct a sequence
  of moves from $q$ to $q'$, then we can generate $I_G$. A move from $q_1$ to
  $q_2$ is a composition of a $q_3$ with a binomial generator $q_4-q_5$ such that
  \[ q_1-q_2 = q_3(q_4-q_5). \]
  We can assume that
  $h_{u,v}(q)\geq h_{u,v}(q')$ 
  
  {\bf Main idea:} To construct the sequence from $q$ to $q'$ we use sequences
  from $q_L$ to $q_L'$ and from $q_R$ to $q_R'$. ($q_L$ is $q$ induced on $L$ and 
  similar for $q_R$.) If we simply took a sequence from $q_L$ to $q_L'$ given by $I_{G[L]}$ and a 
  corresponding one on $R$ and tried to glue them together it would sometimes not work on
  the vertex pair $u$ and $v$. The thing that goes wrong is that the number of
  cuts with $u$ and $v$ in different parts does not need to be the same. That is,
  the height $h_{u,v}$ could be different on the left and the right side.
  But we know that the height is the same 
  for $q_L$ and $q_R$ in the begining of the sequence, and for $q_L'$ and $q_R'$
  in the end of the sequence.

  In the sequence $q_L, \ldots, q'_L$ the number of cuts with $u$ and $v$ in 
  different parts can look like the fat gray line in Figure \ref{fig:movingAround}.
\begin{figure}
  \begin{center}
  \includegraphics[scale=0.75]{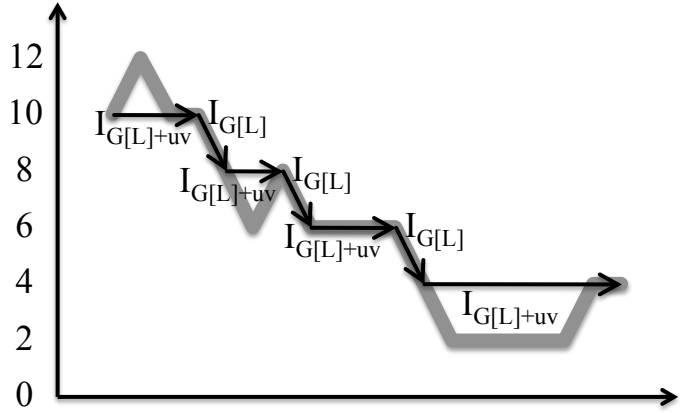}
  \end{center}
  \caption{On the vertical axis is the height with respect to $u$,$v$. }
  \label{fig:movingAround}
\end{figure}
  If it changes, it changes by an even number by Lemma~\ref{lemma:walkmod}.
  It never changes by more than 2 since
  $I_{G[L]}$ is slow-varying. Since the height of the sequence $q_R, \ldots, q'_R$ does not
  have to have the same shape as the grey line, we need to normalize the sequences.
  
  {\bf How to normalize the sequence $q_L, \ldots, q'_L$:} We do this as described
  in Figure \ref{fig:movingAround}. Let $q'_{L,h}$ be the last element in the
  sequence with height $h$ for $h=h_{u,v}(q_L), h_{u,v}(q_L)-2,\ldots, h_{u,v}(q'_L)+2, h_{u,v}(q'_L)$.
  Let $q_{L,h}$ be the element after $q'_{L,h+2}$ in the sequence for 
  $h=h_{u,v}(q_L)-2,\ldots, h_{u,v}(q'_L)+2, h_{u,v}(q'_L)$. And let 
  $q_{L,h_{u,v}(q_L)}=q_L$. In our normalized sequence we still go from
  $q'_{L,h}$ to $q_{L,h-2}$ by using a generator of $I_{G[L]}$. But from
  $q_{L,h}$ to $q'_{L,h}$ we build up the sequence by using generators of
  $I_{G[L]+uv}$, this is possible since the heights of $q_{L,h}$ and
  $q'_{L,h}$ are the same. For our normalized sequence the height is
  never increasing.

  Normalize $q_R, \ldots, q'_R$ the same way. The plot of the heights
  for the normalized sequences on $L$ and $R$ now looks the same and
  we can put the sequences together without any conflicts on $u$ and $v$. 

  Thus we need four kinds of moves:
  \begin{itemize}
     \item[($\mathbf{F}_1$)] all from $I_{G[L]+uv}$, 
     \item[($\mathbf{F}_2$)] all from $I_{G[R]+uv}$,
     \item[($\mathbf{F}_3$)] those from $I_{G[L]}$ and $I_{G[R]}$ that change height by 2,
     \item[($\mathbf{F}_4$)] reorderings to match cuts.
  \end{itemize} 

  Let $\mathbf{F}_L,$ $\mathbf{F}_{L+uv},$ $\mathbf{F}_R,$ and $\mathbf{F}_{R+uv}$
  be the binomial generating sets of $I_{G[L]},$ $I_{G[L]+uv},$ $I_{G[R]},$ and 
  $I_{G[R]+uv}$. If the maximal degree of a binomial in $\mathbf{F}_L$ or $\mathbf{F}_R$
  is $M$ then extend $\mathbf{F}_L$ to
  \[ \tilde{\mathbf{F}}_L = \{ q_1(q_2-q_3) \mid \textrm{degree of }q_1q_2\leq 2M-2 \textrm{ and }
                               q_2-q_3 \in \mathbf{F}_L \} \]
  and $\mathbf{F}_R$ to
  \[ \tilde{\mathbf{F}}_R = \{ q_1(q_2-q_3) \mid \textrm{degree of }q_1q_2\leq 2M-2 \textrm{ and }
                               q_2-q_3 \in \mathbf{F}_R \}. \]
  The extension is needed to allow binomial generators of different degree from the left and
  right side to be joined when the height decreases by two.
  In the definitions of $\mathbf{F}_1$,$\mathbf{F}_2$, and $\mathbf{F}_3$, any product of the type
  \[ \prod_{i=1}^m q_{C_i \mid D_i}\]
  is assumed to have an order such that
  \[ h_{u,v}(q_{C_1 \mid D_1})\geq \cdots \geq  h_{u,v}(q_{C_m \mid D_m}). \]
  Let
  \[ \mathbf{F}_1 = \left\{ \prod_{i=1}^m q_{C_i \mid D_i} - \prod_{i=1}^m q_{C_i' \mid D_i'}
                    \in \mathbb{K}[q_G] \left| \begin{array}{l}
                    \prod_{i=1}^m q_{C_i\cap L \mid D_i\cap L} - \prod_{i=1}^m q_{C_i'\cap L \mid D_i'\cap L} \in \mathbf{F}_{L+uv} \\
                    C_i\cap R = C_i'\cap R \textrm{ for }i=1,\ldots m \\ 
                    \end{array}
                    \right.               
                    \right\} \]
  \[ \mathbf{F}_2 = \left\{ \prod_{i=1}^m q_{C_i \mid D_i} - \prod_{i=1}^m q_{C_i' \mid D_i'}
                    \in \mathbb{K}[q_G] \left| \begin{array}{l}
                    \prod_{i=1}^m q_{C_i\cap R \mid D_i\cap R} - \prod_{i=1}^m q_{C_i'\cap R \mid D_i'\cap R} \in \mathbf{F}_{R+uv} \\
                    C_i\cap L = C_i' \cap L \textrm{ for }i=1,\ldots m \\ 
                    \end{array}
                    \right.               
                    \right\} \]
  \[ \mathbf{F}_3 = \left\{ \prod_{i=1}^m q_{C_i \mid D_i} - \prod_{i=1}^m q_{C_i' \mid D_i'}
                    \in \mathbb{K}[q_G] \left| \begin{array}{l}
                    \prod_{i=1}^m q_{C_i\cap L \mid D_i\cap L} - \prod_{i=1}^m q_{C_i'\cap L \mid D_i'\cap L} \in \tilde{\mathbf{F}}_{L} \\
                    \prod_{i=1}^m q_{C_i\cap R \mid D_i\cap R} - \prod_{i=1}^m q_{C_i'\cap R \mid D_i'\cap R} \in \tilde{\mathbf{F}}_{R} \\
                    h_{u,v}\left(  \prod_{i=1}^m q_{C_i \mid D_i} \right) \neq h_{u,v}\left(  \prod_{i=1}^m q_{C_i \mid D_i} \right)
                    \end{array}
                    \right.               
                    \right\} \]
  \[ \mathbf{F}_4 = \left\{ \prod_{i=1}^2 q_{C_i \mid D_i} - \prod_{i=1}^2 q_{C_i' \mid D_i'}
                    \in \mathbb{K}[q_G] \left| \begin{array}{ll}
                    C_1\cap L = C_1'\cap L, & C_2\cap L  = C_2'\cap L \\
                    C_1\cap R = C_2'\cap R, & C_2\cap R  = C_1'\cap R \\
                    \end{array}
                    \right.               
                    \right\}. \]
We have that $\mathbf{F}=\mathbf{F}_1 \cup \mathbf{F}_2 \cup \mathbf{F}_3 \cup \mathbf{F}_4$ is a generating set of $I_G$.
From that we get:
\[ \mu(G) \leq \max \{ 2, 2\mu(G[L])-2, 2\mu(G[R])-2, \mu(G[L]+uv), \mu(G[R]+uv) \} \]
In $G[L]$ there is an induced path from $u$ to $v$ with more than one edge. For the path with two edges we have
$\mu =2$ and thus by contraction $\mu\geq 2$ for any path, which shows that $\mu(G[L])\geq 2$. The 2 can be removed to get:
\[ \mu(G) \leq \max \{ 2\mu(G[L])-2, 2\mu(G[R])-2, \mu(G[L]+uv), \mu(G[R]+uv) \} \]
\end{proof}

\begin{corollary}\label{cor:main}
  Let $H_1$ and $H_2$ be two graphs on different vertex sets satisfying:
\begin{itemize}
  \item $u_1,v_1$ are two distinct non-adjacent vertices of $H_1$,
  \item $u_2,v_2$ are two distinct non-adjacent vertices of $H_2$,
  \item $H_1$ and $H_2$ are connected,
  \item $\mu(H_1), \mu(H_2), \mu(H_1+u_1v_1), \mu(H_2+u_2v_2)\leq 2.$
\end{itemize}
  Then $\mu\leq 2$ for the graph we get by gluing $u_1=u_2$
  and $v_1=v_2$ in $H_1\cup H_2$.
\end{corollary}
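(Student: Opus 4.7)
The plan is to recognize Corollary \ref{cor:main} as a direct instance of Theorem \ref{theo:mulimit}. Let $G$ denote the glued graph, write $u := u_1 = u_2$ and $v := v_1 = v_2$, and set $L := V(H_1)$, $R := V(H_2)$ as subsets of $V(G)$. Then $L \cup R = V(G)$, $L \cap R = \{u,v\}$, $G[L] = H_1$, and $G[R] = H_2$. Because $u,v$ are non-adjacent in both $H_1$ and $H_2$, they are non-adjacent in $G$ as well, and every edge of $G$ lies in exactly one of $H_1, H_2$, so $E(G) = E(G[L]) \cup E(G[R])$. This puts $G$ in the exact form required by Theorem \ref{theo:mulimit}.

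Next I would verify the two remaining hypotheses of that theorem. Connectedness of $H_1$ and $H_2$ immediately supplies a $u$-to-$v$ path in each of $G[L]$ and $G[R]$. The more substantive point is the existence of slow-varying generators of $I_{G[L]}$ and $I_{G[R]}$ with respect to $(u,v)$; here Proposition \ref{prop:2vary} does all the work. Since $\mu(H_i) \leq 2$ by hypothesis, $I_{H_i}$ admits a generating set of binomials of degree at most $2$, and any such set is slow-varying for every vertex pair, in particular $(u_i, v_i)$.

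With all hypotheses in hand, Theorem \ref{theo:mulimit} yields
\[ \mu(G) \leq \max\bigl\{\, 2\mu(H_1)-2,\; 2\mu(H_2)-2,\; \mu(H_1+u_1v_1),\; \mu(H_2+u_2v_2) \,\bigr\} \leq 2, \]
since each of the four quantities on the right is at most $2$ by assumption. That is the claim of the corollary.

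There is no genuine obstacle here; the corollary is essentially a repackaging of Theorem \ref{theo:mulimit} into the form most convenient for an inductive proof of the Sturmfels--Sullivant conjecture, where one glues smaller $K_4$-minor-free pieces across a pair of non-adjacent vertices. The only subtleties worth flagging in the write-up are: (a) noting why $u$ and $v$ remain non-adjacent in $G$, which is the reason non-adjacency is assumed separately in each $H_i$; and (b) explicitly citing Proposition \ref{prop:2vary} to convert the degree-$2$ assumption on the $\mu$'s into the slow-varying property required by the theorem.
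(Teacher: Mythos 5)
Your proposal is correct and follows exactly the paper's route: the paper's entire proof is ``Insert Proposition~\ref{prop:2vary} into Theorem~\ref{theo:mulimit},'' and you have simply unpacked that one-liner by verifying each hypothesis of the theorem and using the proposition to supply the slow-varying generators. Nothing is missing and nothing is different in substance.
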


\begin{proof}
  Insert Proposition \ref{prop:2vary} into Theorem \ref{theo:mulimit}.
\end{proof}

The graphs without $K_4$-minors are also called series-parallell graphs. Starting with the complete graphs on less than four vertices, the connected
series-parallel graphs can be constructed by the gluing two smaller ones in series over one vertex, or in parallel over two vertices that could be connected or not \cite{D}.

\begin{corollary}[Conjecture 3.5 of Sturmfels and Sullivant \cite{SS}]
  The cut ideal is generated by quadrics if and only if $G$ is free of
  $K_4$-minors.
\end{corollary}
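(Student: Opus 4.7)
The plan is to prove both directions of the biconditional. The easy direction, that $\mu(G)>2$ whenever $G$ has a $K_4$-minor, follows from Corollary 3.3 of \cite{SS} (contraction of an edge or deletion of a vertex cannot increase $\mu$) together with the verification in \cite{SS} that $\mu(K_4)>2$. So the substance lies in the reverse implication: if $G$ is $K_4$-minor free, then $\mu(G)\leq 2$.

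For this I would induct on $|V(G)|+|E(G)|$ using the recursive decomposition of $K_4$-minor free (series-parallel) graphs cited from \cite{D}. The base cases $K_1, K_2, K_3$ are handled directly, and disconnected graphs reduce to their components (the cut ideal factors). If $G$ is connected but has a cut vertex, decompose $G=G_1\cup G_2$ over that vertex and apply Theorem 2.1 of \cite{SS} (gluing over $K_1$); both $\mu(G_i)\leq 2$ hold by induction. If $G$ is $2$-connected, the series-parallel structure theorem produces a $2$-vertex cut $\{u,v\}$ splitting $G=H_1\cup H_2$ with $H_1\cap H_2=\{u,v\}$ and each $H_i$ strictly smaller than $G$. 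If $uv\in E(G)$, then both $H_i$ share the edge $uv$, so Theorem 2.1 of \cite{SS} (gluing over $K_2$) closes the case. If $uv\notin E(G)$, Corollary \ref{cor:main} is the right tool.

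To apply Corollary \ref{cor:main} I need $\mu(H_i)\leq 2$ and $\mu(H_i+u_iv_i)\leq 2$. The first is immediate by induction: $H_i$ is a minor of $G$, hence $K_4$-minor free, and strictly smaller. For the second, $H_i+uv$ is also a minor of $G$: since $G$ is $2$-connected and $H_{3-i}$ is connected, there is a $u$--$v$ path inside $H_{3-i}$; contracting all but one edge of that path and deleting the remaining vertices and edges of $H_{3-i}$ yields exactly $H_i$ with one new edge between $u$ and $v$. Because $uv\notin E(G)$, the graph $H_{3-i}$ must contain at least one vertex outside $\{u,v\}$, so $H_i+uv$ has strictly fewer vertices than $G$ and induction applies. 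Connectedness of the $H_i$ and the existence of $u$--$v$ paths inside each are then automatic from $2$-connectedness of $G$.

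The main obstacle, and the reason Corollary \ref{cor:main} was engineered, is precisely this parallel-composition step with $uv\notin E(G)$: one must control $\mu$ of the graph obtained by adding the chord $uv$ to each side. Verifying that $H_i+uv$ remains $K_4$-minor free on strictly fewer vertices---so that induction closes the loop---is where the argument hinges, and it depends essentially on the hypothesis that $uv$ is not already present in $G$ to guarantee strict descent; otherwise we would fall back into the $uv\in E(G)$ subcase already handled by Theorem 2.1 of \cite{SS}.
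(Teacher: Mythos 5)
Your proof is correct and relies on the same key ingredients as the paper's: Theorem 2.1 of \cite{SS} for gluing over $K_0$, $K_1$, $K_2$, and Corollary~\ref{cor:main} for the genuinely new parallel-composition-with-chord case. The differences are organizational and, if anything, clean things up. The paper runs the induction directly along the recursive series-parallel construction and then has to deal with an awkward ``first case'' in which every parallel decomposition leaves one side being a single edge; it spends a paragraph re-decomposing to escape that situation. You sidestep this by structuring the induction around connectivity: handle disconnected graphs and cut vertices via Theorem 2.1 of \cite{SS}, then observe that a $2$-connected $K_4$-minor-free graph on at least four vertices cannot be $3$-connected, so it admits a $2$-vertex cut $\{u,v\}$; automatically both sides of the split have a vertex outside $\{u,v\}$, so neither side is a bare edge and strict descent is guaranteed. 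You also make explicit a point the paper leaves implicit, namely that $H_i+uv$ is a minor of $G$ (contract a $u$--$v$ path through $H_{3-i}$), hence itself $K_4$-minor free, so the inductive hypothesis applies to it. Two small caveats: the parenthetical ``the cut ideal factors'' for disconnected graphs is best justified by again citing Theorem 2.1 of \cite{SS} (the $K_0$ gluing) rather than treated as self-evident; and the final sentence overstates the role of $uv\notin E(G)$ in securing strict descent --- descent really comes from $\{u,v\}$ being a separating pair, while the hypothesis $uv\notin E(G)$ is what makes Corollary~\ref{cor:main} applicable at all (it requires nonadjacency of $u$ and $v$).
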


\begin{proof}
  We prove that if $G$ is series-parallel then $\mu(G)\leq 2$.
  The other direction was proved in \cite{SS}.
  We only need to prove it for connected series-parallel graphs.

  The proof is by induction on the number of vertices of $G$. If
  there are less than four vertices then $\mu(G) \leq 2$ by
  explicit calculations in \cite{SS}.

  Now assume that $G$ has at least four vertices. If $G$ is
  constructed by two graphs $H_1$ and $H_2$ put in series and
  glued at one vertex, then $\mu(G)=\max\{\mu(H_1),\mu(H_2)\}\leq 2$
  by the fiber construction in \cite{SS}.

  If $G$ is constructed by two graphs $H_1$ and $H_2$ glued 
  parallel together in two vertices we have two cases.

  The first case: However subgraphs $H_1$ and $H_2$ are choosen to
  be glued together in parallel to create $G$, 
  one of them will only be an edge. 

  Assume that $H_2$ is only the
  edge $uv$, and that $uv$ is not in $H_1$. If $H_1$ came from a
  parallel gluing of $H_1'$ and $H_1''$ at $u$ and $v$, then $G$
  could be parallel constructed from $H_1'$ and $H_1''+uv$ and none
  of them is only an edge, which is a contradiction. So $H_1$ is
  from a series gluing at some vertex $w\not\in \{u,v\}$. Both
  graphs glued together to get $H_1$ cannot be only edges, since then
  $G$ is a triangle, and we assumed $G$ to have more than 3 vertices.
  Thus we can assume that the part of $H_1$ between $v$ and $w$ have more
  than two vertices. But then $G$ can be formed as a parallel
  construction glued at $v$ and $w$ where none of the parts is only
  an edge, and that situation is the second case.

  The second case: The graph $G$ can be created by a parallel
  construction at $u,v$ of two graphs $H_1$ and $H_2$ and both
  of them have more than two vertices. If $uv$ is an edge of $G$
  then $\mu(G)=\max \{ \mu(H_1+uv), \mu(H_2+uv) \}\leq 2$
  since $H_1$ and $H_2$ are series-parallel. If there is no
  edge between $u$ and $v$ in $G$ we use that $H_1$, $H_2$,
  $H_1+uv$, $H_2+uv$ are series-parallel and Corollary~\ref{cor:main}
  to get that $\mu(G)\leq 2$.
  
\end{proof}


\begin{thebibliography}{10}

\bibitem{A} Alan Agresti. \emph{Categorical Data Analysis}. Wiley Series in Probability and Mathematical Statistics: Applied Probability and Statistics, \emph{John Wiley \& Sons Inc., New York,} 1990, 558 pp.

\bibitem{BC} 
Joseph Brennan, Guantao Chen.
Toric geometry of series-parallel graphs.
\emph{SIAM J. Discrete Math.} {\bf 23} (2009), no. 2, 754--764. 

\bibitem{DS}
Mike Develin, Seth Sullivant.
Markov bases of binary graph models. 
\emph{Ann. Comb.} {\bf 7} (2003), no. 4, 441--466.

\bibitem{DL}
Michel Marie Deza, Monique Laurent.
\emph{Geometry of cuts and metrics.}
Algorithms and Combinatorics, 15. \emph{Springer-Verlag, Berlin,} 1997. 587 pp. 

\bibitem{DiS}
Persi Diaconis, Bernd Sturmfels. 
Algebraic algorithms for sampling from conditional distributions. 
\emph{Ann. Statist.} {\bf 26} (1998), no. 1, 363--397.

\bibitem{D}
Reinhard Diestel.
\emph{Graph theory.}
Second edition. Graduate Texts in Mathematics, 173. \emph{Springer-Verlag, New York,} 2000. 313 pp.

\bibitem{DSS}
Mathias Drton, Bernd Sturmfels, Seth Sullivant. 
\emph{Lectures on Algebraic Statistics}, Oberwolfach Seminars, 39,  
\emph{Birkh\"auser  Verlag, Basel-Boston-Berlin,} 2009. 171 pp.

\bibitem{E}
Alexander Engstr\"om, Thomas Kahle, Seth Sullivant. Toric Fiber Products II, Manuscript, 2010.

\bibitem{EN}
Alexander Engstr\"om, Patrik Nor\'en. Ideals of Graph Homomorphisms. {\tt arxiv:1002.4679}, 2010. 36 pp.

\bibitem{NP} Uwe Nagel, Sonja Petrovi\'c.
Properties of cut ideals associated to ring graphs. 
\emph{J. Commut. Algebra} {\bf 1} (2009), no. 3, 547--565. 

\bibitem{PS} Lior Pachter, Bernd Sturmfels.
\emph{Algebraic statistics for computational biology.} 
\emph{Cambridge University Press, New York,} 2005. 420 pp. 

\bibitem{SS} Bernd Sturmfels, Seth Sullivant. 
Toric Geometry of Cuts and Splits,
\emph{Michigan Math. J.} {\bf 57} (2008) 689--709.

\bibitem{S} Seth Sullivant. Toric fiber products.
\emph{J. Algebra} {\bf 316} (2007), no. 2, 560--577. 

\end{thebibliography}
\end{document}